\titleformat{\section}{\normalfont\Large\bfseries}{\S\thesection}{1em}{}
\newcommand{\shrinkmargins}[1]{
  \addtolength{\textheight}{#1\topmargin}
  \addtolength{\textheight}{#1\topmargin}
  \addtolength{\textwidth}{#1\oddsidemargin}
  \addtolength{\textwidth}{#1\evensidemargin}
  \addtolength{\topmargin}{-#1\topmargin}
  \addtolength{\oddsidemargin}{-#1\oddsidemargin}
 \addtolength{\evensidemargin}{-#1\evensidemargin}
  }
\theoremstyle{plain}
\newtheorem{theorem}{Theorem}[section]
\newtheorem{lemma}[theorem]{Lemma}
\newtheorem{proposition}[theorem]{Proposition}
\newtheorem{question}[theorem]{Question}
\newtheorem*{teo}{Theorem}
\theoremstyle{remark}
\theoremstyle{definition}
\def \C { \mathbb{C}}
\def \ker { \text{Ker}}
\def \R { \mathbb{R}}
\begin{document}

\title{On a characterization of path connected topological fields}
\date{}
\author{Xavier Caicedo \hspace{1.5in} Guillermo Mantilla-Soler}
\maketitle

\begin{abstract}
The aim of this paper is to give a characterization of path connected
topological fields, inspired by the classical Gelfand correspondence between
a compact Hausdorff topological space $X$ and the space of maximal ideals of
the ring of real valued continuous functions $C(X,\R)$. More explicitly, our
motivation is the following question: What is the essential property of the
topological field $F=\R$ that makes such a correspondence valid for all
compact Hausdorff spaces? It turns out that such a perfect correspondence
exists if and only if $F$ is a path connected topological field.
\end{abstract}

\section{Introduction}

Let us recall briefly what happens with the real field. Let $X\neq \emptyset 
$ be a compact Hausdorff topological space, and let $C(X,\R)$ be the set of
continuous functions from $X$ to $\R$. Since evaluating at a point $x_{0}\in
X$ is a surjective ring homomorphism $\mathrm{ev}_{x_{0}}:C(X,\R)\rightarrow %
\R;\ f\mapsto f(x_{0})$, the kernel of such morphism, $I_{\R
}(x_{0}):=\ker (\mathrm{ev}_{x_{0}})$, is a maximal ideal of $C(X,\R)$. By
endowing the set of maximal ideals $\mathrm{Max}(C(X,\R))$ with the Zariski
topology, Gelfand's theorem tells us that this correspondence is actually a
homeomorphism, explicitly:

\begin{teo}
\label{Gelfand} Let $X$ be a non-empty compact Hausdorff
topological space. Then the map 
\begin{equation*}
\begin{array}{cccccc}
I_{\R}: & X & \rightarrow & \mathrm{Max}(C(X,\R)) &  &  \\ 
& x_{0} & \mapsto & I_{\R}(x_{0}) &  & 
\end{array}%
\end{equation*}
is a homeomorphism.
\end{teo}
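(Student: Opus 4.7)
The plan is to verify that $I_\R$ is a continuous closed bijection, which makes it a homeomorphism. The central analytic input is that a compact Hausdorff space is normal, so \emph{Urysohn's lemma} produces continuous $[0,1]$-valued functions separating disjoint closed subsets of $X$; this will be used both to separate points (injectivity) and to recognize the image of closed sets (the closed-map property).

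For \textbf{injectivity}, given distinct $x_0,x_1\in X$, Urysohn's lemma yields $f\in C(X,\R)$ with $f(x_0)=0$ and $f(x_1)=1$, so $f\in I_\R(x_0)\setminus I_\R(x_1)$. The step I expect to be the main obstacle is \textbf{surjectivity}: given a maximal ideal $M$ of $C(X,\R)$, set $Z(M)=\bigcap_{f\in M}f^{-1}(0)\subseteq X$. I will argue $Z(M)\neq\emptyset$; otherwise, by compactness of $X$ the open cover $\{X\setminus f^{-1}(0):f\in M\}$ admits a finite subcover coming from some $f_1,\dots,f_n\in M$, and then $g=f_1^2+\cdots+f_n^2$ lies in $M$ while being strictly positive on $X$, hence invertible in $C(X,\R)$, contradicting properness of $M$. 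This ``sum of squares'' trick is precisely where compactness of $X$ is essential. Picking any $x_0\in Z(M)$ gives $M\subseteq I_\R(x_0)$, and maximality of $M$ forces $M=I_\R(x_0)$.

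For the topological content, recall that the Zariski-closed sets of $\mathrm{Max}(C(X,\R))$ have the form $V(J)=\{N\in\mathrm{Max}(C(X,\R)):J\subseteq N\}$ for ideals $J\subseteq C(X,\R)$. Continuity of $I_\R$ is immediate from
\[
I_\R^{-1}(V(J))=\{x\in X:f(x)=0\ \text{for all}\ f\in J\}=\bigcap_{f\in J}f^{-1}(0),
\]
which is closed in $X$. For the closed-map property, given a closed $C\subseteq X$, I will set $J_C=\{f\in C(X,\R):f|_C\equiv 0\}$ and verify $I_\R(C)=V(J_C)$. The inclusion $\subseteq$ is by construction. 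For the reverse, any $M\in V(J_C)$ equals $I_\R(x)$ for some $x\in X$ by the surjectivity proved above; if $x\notin C$, Urysohn's lemma supplies $f\in C(X,\R)$ with $f|_C=0$ and $f(x)=1$, exhibiting $f\in J_C\setminus I_\R(x)=J_C\setminus M$, contradicting $J_C\subseteq M$. Thus $I_\R$ is a continuous closed bijection, and therefore a homeomorphism.
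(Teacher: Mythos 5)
Your proof is correct. Three of its four components coincide with what the paper does for a general topological field $F$, specialized back to $F=\R$: continuity is Lemma \ref{continua}; injectivity via Urysohn is the real case of Lemma \ref{UnoaUno}, where for general $F$ a path $\gamma$ from $0_F$ to $1_F$ is composed with the Urysohn function; and your sum-of-squares argument for surjectivity (cover $X$ by the cozero sets of $f\in M$, extract $f_1,\dots,f_n$ by compactness, note $f_1^2+\cdots+f_n^2\in M$ is nowhere zero hence invertible) is exactly Case I of Theorem \ref{surjective} for $F=\R$ --- indeed the paper singles out the vanishing locus of $x_1^2+\cdots+x_n^2$ as the special feature of $\R$ that its Proposition \ref{NoAlgClosed} generalizes. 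Where you genuinely diverge is the bicontinuity step. The paper proves (Lemma \ref{1-2-1}) that the image of $I_F$ is Hausdorff, by producing $f,g$ with $f(x_0)=g(x_1)=1_F$ and $fg\equiv 0_F$ so that $D(f)$ and $D(g)$ are disjoint basic opens separating $I_F(x_0)$ from $I_F(x_1)$, and then invokes the standard fact that a continuous injection from a compact space into a Hausdorff space is bicontinuous. You instead prove the closed-map property directly through the identification $I_{\R}(C)=V(J_C)$, which requires surjectivity plus one more application of Urysohn; that argument is also sound (including the degenerate case $C=\emptyset$, where $J_C$ is the whole ring and $V(J_C)=\emptyset$). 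Your route is slightly more economical for $\R$, since it never discusses separation properties of $\mathrm{Max}(C(X,\R))$; the paper's route isolates Hausdorffness of the image as a consequence of path connectedness of $F$ alone, which is precisely the form needed for the characterization in Theorem \ref{main}, where surjectivity is not yet available.
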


In other words the algebraic structure of the ring $C(X,\R)$ completely
characterizes the space $X$. What is so special about $\R$ that makes such a
powerful characterization possible? Could we ensure the same kind of result
if we replace $\R$ by another topological field $F$?\newline

Let $F$ be a topological field. Let $X\neq \emptyset $ be a compact
Hausdorff topological space, and let $C(X,F)$ be the set of continuous
functions from $X$ to $F$. As in the real case given $x_{0}\in X$ if $%
I_{F}(x_{0})$ is the kernel of the evaluation morphism at the point $x_{0}$
then $I_{F}(x_{0})\in \mathrm{Max}(C(X,F))$. The question we address is the
following:

\noindent What conditions on a topological field $F$ are sufficient and
necessary such that for every compact Hausdorff topological space $X$ the
function 
\begin{equation*}
\begin{array}{cccccc}
I_{F}: & X & \rightarrow & \mathrm{Max}(C(X,F)) &  &  \\ 
& x_{0} & \mapsto & I_{F}(x_{0}) &  & 
\end{array}%
\end{equation*}
is a homeomorphism? The answer is given by the main result of the paper:

\begin{teo}[cf. Theorem \protect\ref{TheRealMain}]
Let $F$ be topological field. Then the Gelfand map $I_{F}$ is a
homeomorphism for every compact Hausdorff topological space $X$ if and only
if $F$ is path connected.
\end{teo}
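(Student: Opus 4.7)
My plan is to prove the two directions of the theorem separately, starting with the easier necessity. To show that $I_F$ being a homeomorphism for every compact Hausdorff $X$ forces $F$ to be path connected, I argue by contradiction. Suppose $F$ is not path connected and let $F_0$ denote the path component of $0 \in F$, so $F_0 \subsetneq F$. Since translation by any element of $F$ and multiplication by any element of $F^\times$ are self-homeomorphisms of $F$, they send path components to path components; a short check gives $F_0 + F_0 \subseteq F_0$ and $F \cdot F_0 \subseteq F_0$, so $M := \{f \in C([0,1], F) : f(0) \in F_0\}$ is an ideal of $C([0,1],F)$. It is proper since any constant function with value $b \in F \setminus F_0$ lies outside $M$, and it clearly contains $I_F(0)$. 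The assumed maximality of $I_F(0)$ then forces $M = I_F(0)$, hence $F_0 = \{0\}$; but this means every continuous map $[0,1] \to F$ has path-connected image inside a singleton path component and is thus constant, so $C([0,1], F) \cong F$ and $\mathrm{Max}(C([0,1], F))$ is a single point, contradicting its homeomorphism with $[0,1]$.

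For the sufficiency direction, assuming $F$ is path connected, I will verify in turn that $I_F$ is continuous, injective, surjective, and closed. Continuity is immediate, since $I_F^{-1}(V(f)) = f^{-1}(0)$ for any $f \in C(X,F)$. Injectivity and closedness both rest on an $F$-valued Urysohn-type separation principle: given a closed set $K \subseteq X$ and a point $x \notin K$, Urysohn's lemma in the compact Hausdorff $X$ supplies $\kappa: X \to [0,1]$ with $\kappa|_K = 0$ and $\kappa(x) = 1$, and composing with a path $\gamma: [0,1] \to F$ from $0$ to some $w \neq 0$ yields $f \in C(X, F)$ with $f|_K = 0$ and $f(x) = w \neq 0$. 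The case $K = \{y\}$ gives injectivity; in general this principle shows $I_F(K) = V(J_K)$ for the ideal $J_K = \{f : f|_K \equiv 0\}$, yielding closedness of $I_F$ once surjectivity is in hand.

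The heart of the argument is surjectivity. By compactness of $X$ and the finite intersection property, showing that every maximal ideal is of the form $I_F(x_0)$ reduces to the following key lemma: if $f_1, \ldots, f_n \in C(X, F)$ satisfy $\bigcap_i f_i^{-1}(0) = \emptyset$, then they generate the unit ideal. I will prove this by induction on $n$. The base $n = 1$ reduces to continuity of inversion in $F$. For $n \geq 2$, set $W := \bigcap_{i<n} f_i^{-1}(0)$ and $Z := f_n^{-1}(0)$; these are disjoint closed sets, so by normality there exist open $U \supseteq W$ and $U' \supseteq Z$ with $\overline{U} \cap \overline{U'} = \emptyset$. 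Applying Urysohn together with a path $\gamma: [0,1] \to F$ from $0$ to $1$ produces $\mu \in C(X, F)$ with $\mu|_{\overline{U}} = 1$ and $\mu|_{\overline{U'}} = 0$. Since $\mu$ vanishes on a neighborhood of $Z$, the quotient $\mu/f_n$ (well defined on $X \setminus \overline{U'}$) extends continuously by $0$ on $\overline{U'}$ to give $c \in C(X,F)$ with $cf_n = \mu$. On the compact Hausdorff subspace $Y := X \setminus U$ the restrictions $f_1|_Y, \ldots, f_{n-1}|_Y$ have no common zero, so the inductive hypothesis yields $\alpha_i \in C(Y, F)$ with $\sum_{i<n} \alpha_i f_i = 1$ on $Y$. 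Setting $a_i := \alpha_i(1 - \mu)$ on $Y$ and $a_i := 0$ on $U$ produces $a_i \in C(X, F)$ satisfying $\sum_{i<n} a_i f_i + c f_n = 1$ on all of $X$.

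The main obstacle I anticipate is precisely this inductive construction, because the classical proof for $F = \R$ takes $\sum f_i^2$ and exploits its positivity, which has no analogue in an arbitrary topological field. Path-connectedness enters as the substitute, supplying the paths from which the $F$-valued bump function $\mu$ is built. A further subtlety is verifying continuity of the glued $a_i$ at the boundary $\partial U$: here $(1 - \mu)|_{\partial U} = 0$ (since $\partial U \subseteq \overline{U}$ and $\mu|_{\overline{U}} = 1$), so continuity of multiplication in $F$ together with continuity of $\alpha_i$ on $Y$ force $a_i = \alpha_i(1 - \mu) \to 0$ at $\partial U$, matching the constant zero extension from inside $U$.
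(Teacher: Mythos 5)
Your proof is correct, but the surjectivity argument --- the heart of the matter --- follows a genuinely different route from the paper's. The paper proves the stronger fact that $I_{F}$ is surjective for \emph{every} topological field, path connected or not; to do so it splits into cases and invokes the classification of non-discrete locally compact fields ($\R$, $\C$, finite extensions of $\Q_p$ or $\F_p((t))$), Lagrange interpolation in the discrete case, and a projective-line compactness trick in the non-locally-compact case, the key device being a function $f_n$ with $f_n^{-1}(0_F)=\{(0_F,\dots,0_F)\}$ applied to a finite family $\psi_1,\dots,\psi_n\in M$ with no common zero. You instead prove only what the equivalence needs --- surjectivity under the standing hypothesis that $F$ is path connected --- via a Gelfand--Kolmogorov-style induction showing that continuous functions with no common zero generate the unit ideal, with path connectedness supplying the $F$-valued bump function $\mu$ that replaces the positivity of $\sum f_i^2$ over $\R$. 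Your version is more elementary and self-contained (no classification theorem), and your handling of the gluing of $c$ and the $a_i$ at $\partial U$ and $\partial U'$ is sound (one would state the definition of $c$ on the open cover $\{X\smallsetminus Z,\,U'\}$ rather than as an ``extension by $0$ to $\overline{U'}$,'' but the continuity check you give covers this); what it does not buy you is the paper's standalone Theorem on unconditional surjectivity, which is of independent interest. Your necessity argument is also a pleasant variant: observing that the path component $F_0$ of $0_F$ is an ideal of the field $F$, hence $\{0_F\}$ when $F$ is not path connected, whereas the paper extracts a path directly from a function in $I_F(0)\setminus I_F(1)$ by normalizing; the remaining steps (continuity from the Zariski topology, injectivity and closedness via Urysohn composed with a path) match the paper's in substance.
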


The continuity of the Gelfand map holds more generally for any
topological field. Injectivity and bicontinuity will be shown to be
equivalent to path connectedness. And, remarkably, surjectivity will be seen
to hold for any topological field.

\ \ \ 

Our proof of the later fact relays on the famous classification of
non-discrete locally compact fields (\cite{Jacobson:1936}, \cite{WZ:1958},
or\ \cite{Bourbaki:1964}), and the observation (Proposition \ref{NoAlgClosed}%
) that for a non algebraically closed field $F$ there is a polynomial
function $\psi :F^{2}\rightarrow F$ such that 
\begin{equation*}
\psi ^{-1}(\{0_{F}\})=(0_{F},0_{F}).
\end{equation*}%
This polynomial must have the form $\psi (x,y)=x\phi _{1}(x,y)+y\phi
_{2}(x,y)$, and it generalizes the function $\psi (x,y)=x^{2}+y^{2}$ for $%
\mathbb{R}$. In the algebraically closed case such polynomial can not exist,
but surjectivity of $I_{\mathbb{C}}$ may be shown utilizing the function $%
\psi (x,y)=x\overline{x}+y\overline{y}$ which has the displayed property. By
analyzing the cases we have in hand, we suspect that there is for any path
connected topological field $F$ a similar function $\psi (x,y)=x\phi
_{1}(x,y)+y\phi _{2}(x,y)$ with $\phi _{1},$ $\phi _{2}:F^{2}\rightarrow F$
continuous, but have not been able to prove this.

\ \ \ 

The subject of rings of continuos functions with values in a field is
classic and has been studied by several authors, see for instance \cite%
{Bachman:1975}, \cite{GelfKolm:1939}, \cite{Niels:1987}. A survey article on
the subject containing a great deal of results and many references is \cite%
{Vechtomov:1996}. Although our characterization of path connected fields
given by Theorem \ref{TheRealMain} is quite natural, it seems to have been
missed in previous characterizations, see \cite[Theorem 8]{Niels:1987}.

\subsection{Path connected fields}

In this paper a \textit{topological field} is a field $F$ which is also a
topological space in which the operations are continuous and the points are
closed. Since the topology must be regular by general properties of
topological groups, $F$ must be Hausdorff. We denote by $0_{F}$ and $1_{F}$
the zero the and identity of $F$ respectively. Recall that a topological
space $X$ is \textit{path connected} if for every two points $x,y\in X$
there is a continuous path $\gamma :[0,1]\rightarrow X$ joining $x$ and $y,$
and it is \textit{arcwise connected} if $\gamma $ may be always chosen as a
homeomorphism between $[0,1]$ and its image. The following observation shows
that the various forms of path connectivity are equivalent in fields.

\begin{lemma}
\label{conexoCharCero} Let $F$ be a topological field. The following are
equivalent:

\begin{itemize}
\item[(i)] There is a continuous path $\gamma :[0,1]\rightarrow F$ joining $%
0_{F}$ and $1_{F}$.

\item[(ii)] $\ F$ is contractible.

\item[(iii)] $F$ is path connected.

\item[(iv)] $F$ is arcwise connected.
\end{itemize}
\end{lemma}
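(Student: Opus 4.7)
The obvious implications (iv) $\Rightarrow$ (iii) $\Rightarrow$ (i) are immediate from the definitions, so the work is to close the loop by proving (i) $\Rightarrow$ (ii) $\Rightarrow$ (iii) and (iii) $\Rightarrow$ (iv). The trick is that, while for a general topological space path connectivity need not imply contractibility, the field structure on $F$ gives a very cheap way to build a contraction out of any single path from $0_{F}$ to $1_{F}$.

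\textbf{From (i) to (ii).} Assume $\gamma:[0,1]\to F$ is continuous with $\gamma(0)=0_F$ and $\gamma(1)=1_F$. I would define the candidate contraction
\begin{equation*}
H:F\times[0,1]\longrightarrow F,\qquad H(x,t)=\gamma(t)\cdot x.
\end{equation*}
Continuity of $H$ follows from joint continuity of multiplication in $F$ together with continuity of $\gamma$, while $H(x,0)=0_F$ and $H(x,1)=x$ identically, so $H$ is a homotopy between the constant map at $0_F$ and the identity on $F$. Hence $F$ is contractible.

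\textbf{From (ii) to (iii).} This is the standard fact that any contractible space is path connected: if $H$ witnesses a contraction to a point $p$, then for any $x\in F$ the map $t\mapsto H(x,t)$ is a continuous path from $p$ to $x$, and by concatenation any two points may be joined. Alternatively, directly from (i), given $a,b\in F$ the map $t\mapsto a+\gamma(t)(b-a)$ is a continuous path from $a$ to $b$, using continuity of the field operations.

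\textbf{From (iii) to (iv).} Here is where the real work sits, and it is the step I expect to be the main obstacle. It is a classical theorem in general topology that in any Hausdorff space a path between two distinct points can be replaced by an arc (injective path), by choosing a suitable monotone reparametrization that collapses each ``loop'' of the original path; the resulting image is the continuous injective image of $[0,1]$, hence a homeomorphism onto its image since $[0,1]$ is compact and the target is Hausdorff. Since $F$, being a topological field, is Hausdorff (as the authors note), this general result applies and promotes path connectedness to arcwise connectedness. I would cite this classical fact rather than reprove it, noting only that the Hausdorff hypothesis needed in its statement is automatic for topological fields.
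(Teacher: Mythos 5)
Your proposal is correct and follows essentially the same route as the paper: a contraction built from scalar multiplication by $\gamma(t)$ (the paper uses $H(t,\lambda)=(1-\gamma(t))\lambda$, yours is $H(x,t)=\gamma(t)x$, which is the same idea), together with an appeal to the classical theorem that path connectedness and arcwise connectedness coincide in Hausdorff spaces (the paper cites Willard, Corollary 31.6, for exactly this step). The only difference is cosmetic: you close the cycle as (i)$\Rightarrow$(ii)$\Rightarrow$(iii)$\Rightarrow$(i) with (iii)$\Rightarrow$(i) being trivial, whereas the paper phrases (iii)$\Rightarrow$(i) via the affine renormalization $t\mapsto(\gamma_1(t)-a)/(b-a)$; both are fine.
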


\begin{proof}Path and arcwise connectendness are known to be equivalent
in Hausdorff spaces (\cite{Willard:1970}, Corollary 31.6). It is enough then
to show the implications (i) $\Longrightarrow $ (ii) and (iii) $
\Longrightarrow $ (i). Given a path $\gamma $ joining $0_{F}$ and $1_{F}$,
the function $H:[0,1]\times F\rightarrow F;$ $(t,\lambda )\mapsto (1-\gamma
(t))\lambda $ gives the result. For the later case, given a path $\gamma
_{1} $ joining two different points $a,b\in F$ the function $\displaystyle
\gamma (t)=\frac{\gamma _{1}(t)-a}{b-a}$ gives a path between $0_{F}$ and $
1_{F}$. \end{proof}

\section{A first characterization}

We prove in this section a first approximation to our main result (Theorem %
\ref{main} below) by following essentially the lines of the proof that
the map $I_{\R}$ is injective and bicontinuous for any compact Hausdorff $%
X $.

Continuity of $I_{\R}$ follows from definition of the Zariski topology, and
from the fact that $0$ is closed in $\R$. This holds for every topological
field $F$ and every space $X$ (not necessarily compact or Hausdorff).

\begin{lemma}
\label{continua} For any space $X$ and any topological field the Gelfand map 
$I_{F}:X\rightarrow \mathrm{Max}(C(X,F))$ is continuous.
\end{lemma}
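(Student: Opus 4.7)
The plan is to verify continuity of $I_F$ by checking the preimage of each element in a convenient subbasis for the closed sets of the Zariski topology on $\mathrm{Max}(C(X,F))$.

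First, I recall the definition: the Zariski topology on $\mathrm{Max}(C(X,F))$ has as closed sets the collections $V(S) = \{ M \in \mathrm{Max}(C(X,F)) : S \subseteq M \}$, indexed by subsets $S \subseteq C(X,F)$. Since $V(S) = \bigcap_{f \in S} V(f)$, the sets $V(f)$ with $f \in C(X,F)$ form a subbasis (in fact a basis closed under finite intersections) for the closed sets, so continuity reduces to showing that $I_F^{-1}(V(f))$ is closed in $X$ for every $f \in C(X,F)$.

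Next, I compute this preimage directly: by definition of $I_F(x)$ as the kernel of evaluation at $x$, we have
\[
I_F^{-1}(V(f)) = \{ x \in X : f \in I_F(x)\} = \{ x \in X : f(x) = 0_F\} = f^{-1}(\{0_F\}).
\]
The key input is that $\{0_F\}$ is closed in $F$ because in a topological field all points are closed by assumption (equivalently, $F$ is $T_1$, or even Hausdorff as noted in the paper). Since $f$ is continuous, $f^{-1}(\{0_F\})$ is closed in $X$, which completes the argument. No compactness, Hausdorffness, or path connectedness of $X$ is needed, and no additional structure on $F$ beyond the standing hypothesis that points are closed.

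There is no real obstacle here; the whole content is matching the definition of the Zariski topology against the definition of $I_F$ and invoking continuity of $f$. The only small subtlety worth stating explicitly is that the family $\{V(f)\}_{f \in C(X,F)}$ generates the Zariski-closed sets, so that checking preimages of these suffices.
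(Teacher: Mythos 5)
Your proof is correct and follows essentially the same route as the paper: both identify the Zariski-closed sets as $\{M : S\subseteq M\}$, compute the preimage under $I_F$ as $\bigcap_{f\in S} f^{-1}(\{0_F\})$ (you simply reduce to the singleton case $V(f)$ first), and conclude using the closedness of $\{0_F\}$ and continuity of each $f$.
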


\begin{proof}By definition of the Zariski topology a closed subset of $
\mathrm{Max}(C(X,F))$ is of the form 
\begin{equation*}
C_{S}:=\{M\in \mathrm{Max}(C(X,F))\mid S\subseteq M\}
\end{equation*}
for some $S\subseteq C(X,F).$ Continuity of $I_{F}$ follows since $\{0_{F}\}$ is
closed and 
\begin{equation*}
I_{F}^{-1}(C_{S})=\{x:S\subseteq I_{F}(x)\}=\bigcap_{f\in S}f^{-1}(0_{F}).
\end{equation*} \end{proof}

Injectivity of $I_{\R}$ follows from the fact that $\R$ is path connected
and compact Hausdorff spaces are completely regular. These ideas can be
generalized as follows:

\begin{lemma}
\label{UnoaUno}Let $F$ be a topological field. Then $F$ is path connected if
and only if for any compact Hausdorff space $X$ the Gelfand map $%
I_{F}:X\rightarrow \mathrm{Max}(C(X,F))$ is injective.
\end{lemma}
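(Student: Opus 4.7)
The plan is to prove both implications directly by producing, or ruling out, continuous separating functions into $F$, leveraging the equivalent characterizations of path connectedness provided by Lemma~\ref{conexoCharCero}.

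For the forward implication, I would assume $F$ is path connected and let $X$ be a compact Hausdorff space with distinct points $x_0,x_1$. Since compact Hausdorff spaces are normal, Urysohn's lemma produces a continuous $g\colon X\to[0,1]$ with $g(x_0)=0$ and $g(x_1)=1$. Lemma~\ref{conexoCharCero}(i) supplies a continuous path $\gamma\colon[0,1]\to F$ from $0_F$ to $1_F$. The composite $f:=\gamma\circ g$ lies in $C(X,F)$, vanishes at $x_0$, and takes the nonzero value $1_F$ at $x_1$; hence $f\in I_F(x_0)\setminus I_F(x_1)$, giving $I_F(x_0)\neq I_F(x_1)$.

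For the converse I would argue the contrapositive on the natural test space $X=[0,1]$, showing that non-path-connectedness of $F$ forces $I_F(0)=I_F(1)$. Given any $f\in C([0,1],F)$ with $f(0)=0_F$, suppose $f(1)=a\neq 0_F$. Since multiplication in $F$ is jointly continuous, $t\mapsto a^{-1}f(t)$ is a continuous path from $0_F$ to $1_F$, contradicting Lemma~\ref{conexoCharCero} under the assumption that $F$ is not path connected. Hence $f(1)=0_F$; the same rescaling applied to $t\mapsto f(1-t)$ gives the reverse implication, so $I_F(0)=I_F(1)$ while $0\neq 1$, showing $I_F$ is not injective.

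The forward direction is essentially the classical Urysohn-style argument used for $\R$, with path connectedness only serving to transport a $[0,1]$-valued separator into $F$. The only genuine design choice is picking a good witness space for the backward direction; the unit interval is the natural candidate precisely because a function separating its two endpoints in $F$ would itself (after rescaling) be a path from $0_F$ to $1_F$.
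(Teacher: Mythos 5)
Your proof is correct and follows essentially the same route as the paper: the forward direction composes a $[0,1]$-valued separating function (Urysohn/complete regularity) with a path from $0_F$ to $1_F$, and the backward direction tests on $X=[0,1]$ and rescales a separating function by the inverse of its nonzero endpoint value to produce a path. The only cosmetic difference is that you phrase the backward direction contrapositively, whereas the paper argues it directly.
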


\begin{proof} Assume $I_{F}$ is injective for the space $X=[0,1]$ then $
I_{F}(0)\neq I_{F}(1)$, pick $f\in I_{F}(0)\setminus I_{F}(1)$ then $
f(0)=0_{F}$ and $f(1)\neq 0_{F}.$ Hence, $\gamma =$ $f(1)^{-1}f$ defines a
path connecting $0_{F}$ and $1_{F}$. Reciprocally, assume there is a continuous path $\gamma $ as described.
Let $x\neq y\in X$. Since $X$ is completely regular it follows that there is
a continuous function $f:X\rightarrow \lbrack 0,1]$ such that $f(x)=0$ and $
f(y)=1$. Notice that $\gamma \circ f(x)=0_{F}$ and $\gamma \circ f(y)=1_{F}$
. In particular, $\gamma \circ f\in I_{F}(x)\setminus I_{F}(y)$ hence $I_{F}$
is injective. \end{proof}

Path connectedness of $F$ implies also that the image of the embedding is
Hausdorff for any completely regular space. Recall that an open basis for $%
\mathrm{Max}(C(X,F))$ is given by the sets $D(f)=\{M:f\notin M\}.$

\begin{lemma}
\label{1-2-1} Let $F$ be a path connected topological field. Then the image
of $I_{F}$ is Hausdorff for every compact Hausdorff space $X$.
\end{lemma}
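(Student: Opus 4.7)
The plan is to show that any two distinct points of $I_F(X)$ can be separated by basic open sets of the Zariski topology, namely sets of the form $D(f)=\{M:f\notin M\}$. Given $x\neq y$ in $X$, by the injectivity lemma (Lemma \ref{UnoaUno}) we already know $I_F(x)\neq I_F(y)$. To separate them, I would look for continuous functions $\widetilde f,\widetilde g\colon X\to F$ satisfying $\widetilde f(x)\neq 0_F$, $\widetilde g(y)\neq 0_F$, and $\widetilde f\cdot\widetilde g\equiv 0_F$. The last condition is the key, because then for every maximal ideal $M$ of $C(X,F)$ the product $\widetilde f\widetilde g=0$ lies in $M$, and since $M$ is prime (the quotient is a field), at least one of $\widetilde f,\widetilde g$ lies in $M$; hence $D(\widetilde f)\cap D(\widetilde g)=\emptyset$. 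The first two conditions ensure $I_F(x)\in D(\widetilde f)$ and $I_F(y)\in D(\widetilde g)$, finishing the separation.

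To construct $\widetilde f$ and $\widetilde g$, I would use that $X$ is normal (being compact Hausdorff), so there exist disjoint open sets $U\ni x$ and $V\ni y$. Urysohn's lemma then provides continuous $f,g\colon X\to[0,1]$ with $f(x)=1$, $f\equiv 0$ on $X\setminus U$, and $g(y)=1$, $g\equiv 0$ on $X\setminus V$. By construction $f\cdot g\equiv 0$ on $X$, since at each point $z\in X$ either $z\notin U$ (forcing $f(z)=0$) or $z\in U\subseteq X\setminus V$ (forcing $g(z)=0$).

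Finally, I would transport these real-valued functions to $F$ by composition with the continuous path $\gamma\colon[0,1]\to F$ joining $0_F$ and $1_F$ provided by path connectedness of $F$. Setting $\widetilde f=\gamma\circ f$ and $\widetilde g=\gamma\circ g$, continuity is immediate, and $\widetilde f(x)=\gamma(1)=1_F\neq 0_F$, $\widetilde g(y)=1_F\neq 0_F$. Moreover, at any point $z\in X$ either $f(z)=0$ or $g(z)=0$, so either $\widetilde f(z)=\gamma(0)=0_F$ or $\widetilde g(z)=0_F$, whence $\widetilde f\widetilde g\equiv 0_F$. This produces the desired separating basic opens.

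There is no real obstacle here: the proof is a packaging of three standard ingredients, namely normality of compact Hausdorff spaces, Urysohn's lemma, and the path in $F$. The only mild subtlety worth flagging is the use of the fact that maximal ideals of $C(X,F)$ are prime (a standard consequence of $C(X,F)/M$ being a field), since that is what turns $\widetilde f\widetilde g\equiv 0_F$ into $D(\widetilde f)\cap D(\widetilde g)=\emptyset$.
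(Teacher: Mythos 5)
Your proof is correct and follows essentially the same route as the paper's: separate $x$ and $y$ by disjoint open sets, use complete regularity (your Urysohn step) to get $[0,1]$-valued bump functions supported in them, and push them into $F$ along the path $\gamma$ to obtain $f,g$ with $f(x)g(y)\neq 0_F$ and $fg\equiv 0_F$. Your explicit remark that $fg\equiv 0_F$ forces $D(f)\cap D(g)=\emptyset$ because maximal ideals are prime is a welcome justification of a step the paper leaves implicit.
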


\begin{proof} To separate $I_{F}(x_{0})\neq I_{F}(x_{1})$ in $\mathrm{Max}
(C(X,F))$ it is enough to show that there are $f,g\in C(X,F)$ such that $
f(x_{0})g(x_{1})\neq 0_{F}$ (thus, $I_{F}(x_{0})\in D(f),$ $I_{F}(x_{1})\in
D(g))$ and $fg\equiv 0_{F}$ (thus $D(f)\cap D(g)=\emptyset )$ Since $
x_{0}\neq x_{1}$ and $X$ is Hausdorff there are disjoint open sets $U_{i},$ $
i=0,1$, such that $x_{i}\in U_{i}$. Letting $C_{i}:=X\setminus U_{i}$ we see
that the $C_{i}$ are closed subsets of $X$. Therefore, by complete
regularity of $X$ there are $\widetilde{f},\widetilde{g}\in C(X,[0,1])$ such
that 
\begin{equation*}
\widetilde{g}(x_{1})=1=\widetilde{f}(x_{0})\ \mathrm{and}\ \widetilde{g}
(C_{0})=0=\widetilde{f}(C_{1}).
\end{equation*}%
Fix a continuous path $\gamma :[0,1]\rightarrow F$ joining $0_{F}$ and $
1_{F} $, and let $f:=\gamma \circ \widetilde{f}$ and $g:=\gamma \circ 
\widetilde{g} $. By the above $f(x_{0})=1_{F}$ and $g(x_{1})=1_{F}$. Let $
x\in X$, then $x\in C_{0}$ or $x\in C_{1}$, which by construction implies
that $f(x)g(x)=0_{F}.$ \end{proof}

By standard topology a continuous injection from a compact space in a
Hausdorff space is bicontinuous. Hence, combining lemmas 2.1, 2,1 and 2.3,
we obtain a first characterization.

\begin{theorem}
\label{main}Let $F$ be a topological field. Then $F$ is path connected if
and only if for every compact Hausdorff space $X$ the Gelfand map $I_{F}$
induces a homeomorphism between $X$ and its image.
\end{theorem}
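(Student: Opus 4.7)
The plan is simply to assemble the three lemmas already proved in this section; no new idea is required, and the authors essentially telegraph this with the remark that a continuous injection from a compact space into a Hausdorff space is automatically bicontinuous.

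For the forward direction, assume $F$ is path connected and let $X$ be a compact Hausdorff space. By Lemma \ref{continua}, the Gelfand map $I_{F}\colon X\to \mathrm{Max}(C(X,F))$ is continuous, so its corestriction to $I_{F}(X)$ is also continuous. By Lemma \ref{UnoaUno}, path connectedness of $F$ guarantees that $I_{F}$ is injective, so we obtain a continuous bijection from the compact space $X$ onto $I_{F}(X)$. By Lemma \ref{1-2-1}, the subspace $I_{F}(X)$ is Hausdorff. Then I would invoke the standard topological fact that a continuous bijection from a compact space to a Hausdorff space is a homeomorphism: any closed subset of $X$ is compact, its image under $I_{F}$ is compact, and hence closed in the Hausdorff space $I_{F}(X)$, so $I_{F}$ is closed, completing the argument.

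For the converse, assume that for every compact Hausdorff space $X$ the map $I_{F}$ induces a homeomorphism between $X$ and $I_{F}(X)$. In particular $I_{F}$ is injective for every such $X$, so Lemma \ref{UnoaUno} immediately forces $F$ to be path connected.

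There is no real obstacle: every substantive step has already been isolated in Lemmas \ref{continua}, \ref{UnoaUno} and \ref{1-2-1}. The one thing to keep straight is that this theorem only claims a homeomorphism \emph{onto the image}; the stronger statement that $I_{F}$ is surjective onto $\mathrm{Max}(C(X,F))$ is a separate matter, reserved for Theorem \ref{TheRealMain} and requiring the construction of the polynomial $\psi(x,y)=x\phi_{1}(x,y)+y\phi_{2}(x,y)$ mentioned in the introduction.
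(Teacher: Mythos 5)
Your proof is correct and is essentially the paper's own argument: the authors likewise obtain the theorem by combining Lemmas \ref{continua}, \ref{UnoaUno}, and \ref{1-2-1} with the standard fact that a continuous injection from a compact space into a Hausdorff space is bicontinuous, and the converse follows from Lemma \ref{UnoaUno} alone. Your remark that surjectivity onto $\mathrm{Max}(C(X,F))$ is a separate issue handled later is also exactly how the paper organizes things.
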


\section{Surjectivity}

We will show in this section that the Gelfand map is surjective for any
compact Hausdorff space $X$ and any topological field $F.$ In this case the
techniques for $I_{\R}$ do not generalize fully. Surjectivity of $I_{\R}$
follows basically from the fact that for every positive $n$ the vanishing
locus of the polynomial $f_{n}:=x_{1}^{2}+...+x_{n}^{2}$ in $\R^{n}$ is the
single point $(0_{F},...,0_{F})$. Such a polynomial function can not exist in
algebraically closed fields, but we notice that it exists for any non
algebraically closed field $F,$ which will lead us to the proof of
surjectivity in this case.

\begin{proposition}\label{PolyOnlyVanishZero}
\label{NoAlgClosed} Let $F$ be a non algebraically closed field. Then for
each positive integer $n$ there is a polynomial map $f_{n}:F^{n}\rightarrow
F $ such that $f_{n}^{-1}(0_{F})=\left\{ (0_{F},...,0_{F})\right\} $.
\end{proposition}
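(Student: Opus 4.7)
The plan is to exploit the non-algebraic closedness of $F$ by producing a one-variable polynomial over $F$ with no root, homogenizing it into a binary form that vanishes only at the origin, and then composing recursively to obtain $f_n$ for every $n\ge 1$.

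First, since $F$ is not algebraically closed and every degree-one polynomial over a field has a root, there exists a polynomial $p(x)=a_d x^d+\cdots+a_1 x+a_0\in F[x]$ with $d\ge 2$, $a_d\neq 0_F$, and $p^{-1}(0_F)=\emptyset$. I would introduce its homogenization
$$P(x,y):=a_d x^d+a_{d-1} x^{d-1} y+\cdots+a_1 x y^{d-1}+a_0 y^d\in F[x,y],$$
and verify that $P(x,y)=0_F$ forces $x=y=0_F$: if $y=0_F$ then $P(x,0_F)=a_d x^d$ vanishes only at $x=0_F$ because $a_d\neq 0_F$; if $y\neq 0_F$ then dividing by $y^d$ yields $p(x/y)=0_F$, contradicting the choice of $p$. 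This already handles the case $n=2$ with $f_2:=P$, while $f_1(x):=x$ settles $n=1$.

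For $n\ge 2$ I would then define recursively
$$f_n(x_1,\ldots,x_n):=P\bigl(f_{n-1}(x_1,\ldots,x_{n-1}),\,x_n\bigr),$$
which is a polynomial map as a composition of polynomials. An induction on $n$ finishes things off: if $f_n(x_1,\ldots,x_n)=0_F$, the property of $P$ forces $f_{n-1}(x_1,\ldots,x_{n-1})=0_F$ and $x_n=0_F$, and the inductive hypothesis then yields $x_1=\cdots=x_{n-1}=0_F$.

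The only genuinely creative step is the passage from a rootless polynomial to its homogenization, which is an anisotropic norm-type form associated with the extension $F(\alpha)/F$ for $\alpha$ a root of $p$ in an algebraic closure; this is exactly the generalization of $x^2+y^2$ over $\mathbb{R}$ highlighted in the introduction. Once this observation is in place, the remainder is routine bookkeeping, so I do not anticipate a serious obstacle beyond identifying the right form $P$.
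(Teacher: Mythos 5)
Your proposal is correct and follows essentially the same route as the paper: homogenize a rootless polynomial of positive degree to get a binary form vanishing only at the origin, then build $f_n$ by the recursion $f_{n+1}=f_2(f_n,x_{n+1})$. Your verification that the homogenization vanishes only at $(0_F,0_F)$ (splitting into the cases $y=0_F$ and $y\neq 0_F$) is slightly more detailed than the paper's, which simply asserts this fact.
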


\begin{proof} Since $F$ is not algebraically closed there is a monic
polynomial $f(x)\in F[x]$, of positive degree $m$, which has no zeros in $F$
. Let $f_{2}(x,y):=y^{m}f(\frac{x}{y})\in F[x,y]$ be the homogenization of $
f $. Since $f(x)$ has no roots in $F$ then $f_{2}^{-1}(0_{F})=\left\{
(0_{F},0_{F})\right\} $. Now proceed by induction. Let $f_{1}:F\rightarrow F$
be the identity function, and suppose that for $n\geq 1$ we have defined $
f_{n}$. Then $\displaystyle 
f_{n+1}(x_{1},...,x_{n+1}):=f_{2}(f_{n}(x_{1},...,x_{n}),x_{n+1})$ is also
polynomial and
\begin{equation*}
f_{n+1}^{-1}(0_{F})=(f_{n} \times f_{1})^{-1}(\left\{ (0_{F},0_{F})\right\}
)=\left\{ (0_{F},...,0_{F})\right\} \times \{0_{F}\}=\left\{
(0_{F},...,0_{F})\right\} .
\end{equation*} \end{proof}

For the complex numbers surjectivity of $I_{\C}$ may be shown as for $I_{\R}$
by taking the continuous function $f_{n}:=x_{1}\overline{x_{1}}+...+x_{n} 
\overline{x_{n}}$. It is possible that for any field there is continuous
function vanishing only at $(0_{F},...,0_{F})$ of the form $\displaystyle 
f=x_{1}\phi _{1}+...+x_{n}\phi _{n},$ where $x_{i}:F^{n}\rightarrow F$
denotes the $i$-th coordinate function and $\phi _{i}:F^{n}\rightarrow F$ is
continuous$.$ However, we have not been able to prove this. Fortunately, a
weaker version of this idea gives an alternative argument that works for all
fields.

\begin{theorem}
\label{surjective}Let $F$ be a topological field. Then for any compact
Hausdorff space $X$ the Gelfand map $I_{F}:X\rightarrow \mathrm{Max}(C(X,F))$
is surjective.
\end{theorem}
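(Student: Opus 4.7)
The plan is to argue by contradiction. Suppose $M \subseteq C(X,F)$ is maximal with $M \neq I_F(x_0)$ for every $x_0 \in X$. For each $x \in X$ one may pick $f_x \in M$ with $f_x(x) \neq 0_F$; continuity of $f_x$ together with the fact that $\{0_F\}$ is closed make $U_x := f_x^{-1}(F \setminus \{0_F\})$ an open neighbourhood of $x$, and compactness of $X$ yields a finite subcover $U_{x_1},\ldots,U_{x_n}$. The corresponding $f_1,\ldots,f_n \in M$ then have no common zero on $X$, so it will suffice to produce $g_1,\ldots,g_n \in C(X,F)$ with $\sum_{i=1}^{n} g_i f_i \equiv 1_F$: the unit $1_F$ will lie in $(f_1,\ldots,f_n) \subseteq M$, contradicting the properness of $M$.

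When $F$ is not algebraically closed, I would apply Proposition \ref{NoAlgClosed} to obtain a polynomial $\psi\colon F^n \to F$ whose only zero is $(0_F,\ldots,0_F)$. Since $\psi(0_F,\ldots,0_F) = 0_F$ its constant term vanishes, so $\psi(y_1,\ldots,y_n) = \sum_{i=1}^{n} y_i\, h_i(y_1,\ldots,y_n)$ for some $h_i \in F[y_1,\ldots,y_n]$. The function $g := \psi(f_1,\ldots,f_n) = \sum_i f_i \cdot h_i(f_1,\ldots,f_n)$ then lies in $(f_1,\ldots,f_n)$ and is nowhere zero on $X$ by the choice of $\psi$; continuity of inversion in the topological field $F$ gives $1/g \in C(X,F)$, so setting $g_i := h_i(f_1,\ldots,f_n)/g$ yields the desired identity.

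When $F$ is algebraically closed no such polynomial $\psi$ can exist, and my plan is to use the classification of non-discrete locally compact fields to isolate the cases that survive. The only algebraically closed entry of that list is $\mathbb{C}$, and there the continuous function $\psi(y_1,\ldots,y_n) = y_1\overline{y_1} + \cdots + y_n\overline{y_n}$ still satisfies $\psi^{-1}(0) = \{0\}$ and admits the decomposition $\psi = \sum_i y_i \cdot \overline{y_i}$, so the argument of the previous paragraph runs verbatim with the continuous functions $\overline{y_i}$ in place of the polynomials $h_i$. The remaining algebraically closed topological fields are forced by the classification to be either discrete/totally disconnected or connected without being locally compact; in the totally disconnected case I would first replace $X$ by its quotient modulo connected components (continuous maps to $F$ factor through it) and then refine $\{U_{x_i}\}$ to a disjoint clopen partition $\{V_i\}$, taking $g_i := \chi_{V_i}/f_i$, which is continuous precisely because $V_i$ is clopen and $f_i$ is nonzero on $V_i$. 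The outstanding connected non-$\mathbb{C}$ pocket is the main obstacle; bridging it appears to require the weaker form of the $\sum y_i \phi_i$ identity alluded to in the introduction.
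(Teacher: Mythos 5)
Your setup (finite subcover, reduction to producing a nowhere-vanishing element of $(f_1,\ldots,f_n)\subseteq M$), your treatment of non algebraically closed fields via Proposition \ref{NoAlgClosed}, and your treatment of $\mathbb{C}$ via $\sum y_i\overline{y_i}$ all match the paper. But you have correctly diagnosed, and then left open, a genuine gap: an algebraically closed topological field other than $\mathbb{C}$ need not be discrete or totally disconnected, and your proposal has no argument for that remaining case. (Your trichotomy is also not exhaustive as stated: a field can be neither totally disconnected nor connected.) Since the statement is claimed for \emph{every} topological field, the proof is incomplete as it stands.

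The idea you are missing is the paper's Case III, which handles \emph{every} field that is not locally compact --- with no appeal to algebraic closure, connectedness, or total disconnectedness --- and which, together with the classification of non-discrete locally compact fields and the discrete case, exhausts all possibilities. Restrict attention to the compact set $Y = X\setminus\bigl(D(\psi_3)\cup\cdots\cup D(\psi_n)\bigr)$, on which $\psi_1,\psi_2$ have no common zero, and consider the continuous map $x\mapsto[\psi_1(x),\psi_2(x)]$ from $Y$ into the projective line $P_1(F)$. Its image is compact; if $F$ is not locally compact then $P_1(F)$ is not compact, so the image omits infinitely many points of $P_1(F)$, in particular some $[a,1]$ with $a\in F$. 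One checks directly that $\psi_1 - a\psi_2$ then vanishes nowhere on $Y$, so $X = D(\psi_1-a\psi_2)\cup D(\psi_3)\cup\cdots\cup D(\psi_n)$, and iterating reduces $n$ by one each time until a single nowhere-vanishing linear combination $a_1\psi_1+\cdots+a_n\psi_n\in M$ remains. This is exactly the device that bridges your ``connected non-$\mathbb{C}$ pocket,'' and it renders your totally-disconnected clopen-partition argument unnecessary (though that argument is correct and is a pleasant alternative to the paper's Lagrange interpolation in the discrete case).
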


\begin{proof} Let $M\in \mathrm{Max}(C(X,F))$ and for $\psi \in M$ let $
D(\psi ):=\psi ^{-1}(F\setminus \{0_{F}\})$. Notice that $
I_{F}^{-1}(M)=\{x:I_{F}(x)\supseteq M\}=\bigcap_{\psi \in M}\psi
^{-1}(0_{F}).$ Hence, 
\begin{equation*}
I_{F}^{-1}(M)=X\setminus \bigcup_{\psi \in M}D(\psi ).
\end{equation*}
Suppose that $M$ is not in the image of $I_{F}$. It follows from the
compactness of $X$ that there are finitely many $\psi _{1},...,\psi _{n}\in
M $ such that $X=D(\psi _{1})\cup ...\cup D(\psi _{n})$. Hence, any element
is not zero by some $\psi _{i}.$ We will show that there is a continuous function $
f_{n}: F^{n} \to F$ such that $f_{n}(\psi _{1},...,\psi _{n})\in M$
and $f_{n}(\psi _{1}(x),...,\psi _{n}(x))$ is never 0 in $X$. This
contradicts the fact that $M$ is a proper ideal. To achieve this we consider
several cases.

\ \ 

\textbf{Case I}. $F$ is non-discrete locally compact then $F$ must be $
\mathbb{C}$, $\mathbb{R}$, a finite extension of $\mathbb{Q}_{p}$ or a
finite extension of $\mathbb{F}_{p}((t)).$ For $\mathbb{C}$ take $f_{n}:=x_{1}\overline{x_{1}}+...+x_{n}
\overline{x_{n}}$. In the remaining cases $F$ is not
algebraically closed. 
Therefore, the polynomial $f_{n}$ provided by Proposition \ref{PolyOnlyVanishZero} will do: $
f_{n}(\psi _{1}(x),...,\psi _{n}(x))$ never vanishes because $(\psi
_{1}(x),...,\psi _{n}(x))$ is never $(0_{F},...,0_{F})$ and $f_{n}(\psi
_{1},..,\psi _{n})\in M$ since $f_{n}$ does not have constant term.

\ \ 

\textbf{Case II.} $F$ is discrete$.$ Consider the continuous map $x\mapsto
(\psi _{1}(x),...,\psi _{n}(x))\ $from $X$ into $F^{n}.$ Its image $
J=\{(\psi _{1}(x),...,\psi _{n}(x)):x\in X\}$ is compact and thus necessarily
finite since $F^{n}$ is discrete, moreover, it does not include $
(0_{F},...,0_{F})$ by hypothesis. Find a polynomial $f_{n}(x_{1},...,x_{n})$
identically $1_{F}$ in $J$ and $0_{F}$ in $(0_{F},...,0_{F})$ (Lagrange
interpolation). Then $f_{n}(\psi _{1}(x),...,\psi _{n}(x))=1_{F}$ for any $
x\in X$ and belongs to $M$ because $f_{n}$ does not have constant term.

\ \ 

\textbf{Case III}. $F$ is not locally compact. Consider first $D(\psi
_{1})\cup D(\psi _{2})$ restricted to the compact space $Y=X\smallsetminus
D(\psi _{3})\cup ...\cup D(\psi _{n}),$ and consider the map $x\mapsto
\lbrack \psi _{1}(x),\psi _{2}(x)]\ $from $Y$ into $P_{1}(F),$ the
projective $F$-line with the quotient topology, and let $I$ be the image of
this map. Recall that $P_{1}(F)=F\cup \{[1,0]\}$ where $F$ is topologically
embedded in $P_{1}(F)$ via the identification $F\sim \{[a,1]:a\in F\}$. As $
I $ is compact, the set $P_{1}(F)\smallsetminus I$ must be infinite, as otherwise $
P_{1}(F)$ would be compact and thus $F$ would be locally compact. Then we
may find $[a,1]\in F$ such that $[\psi _{1}(x),\psi _{2}(x)]\neq \lbrack
a,1]\ $for each $x\in Y.$ If $\psi _{2}(x)=0$ then $\psi _{1}(x)\neq 0$ and
thus $\psi _{1}(x)-a\psi _{2}(x)\neq 0$. If $\psi _{2}(x)\not=0$ then $\psi
_{1}(x)/\psi _{2}(x)\neq a$ and also $\psi _{1}(x)-a\psi _{2}(x)\neq 0.$
Therefore $\psi _{1}-a\psi _{2}$ does not vanish in $Y$ and we have 
\begin{equation*}
X=D(\psi _{1}-a\psi _{2})\cup D(\psi _{3})\cup ...\cup D(\psi _{n})
\end{equation*}%
Repeating the procedure with the pair $[\psi _{1}-a\psi _{2},\psi _{3}],$
and continuing inductively, we obtain finally $X=D(a_{1}\psi _{1}+a_{2}\psi
_{2}+...+a_{n}\psi _{n})$ where evidently $a_{1}\psi _{1}+a_{2}\psi
_{2}+...+a_{n}\psi _{n}\in M.$ \end{proof}

\section{Conclusion}

Thanks to theorems \ref{main} and \ref{surjective}, we have our main result:

\begin{theorem}
\label{TheRealMain}A topological field $F$ is path connected if and only if
for every compact Hausdorff topological space $X$ the Gelfand map $%
I_{F}:X\rightarrow \mathrm{Max}(C(X,F))$ is a homeomorphism.
\end{theorem}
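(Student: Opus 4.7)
The plan is essentially to package together the two substantive results already established, namely Theorem \ref{main} and Theorem \ref{surjective}, so the proof is a short bookkeeping argument rather than a new technical step.

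For the forward implication, I would assume $F$ is path connected and fix a compact Hausdorff space $X$. Theorem \ref{main} then provides that the Gelfand map $I_F: X \to \mathrm{Max}(C(X,F))$ is a homeomorphism onto its image in $\mathrm{Max}(C(X,F))$. Separately, Theorem \ref{surjective} gives that $I_F$ is surjective, with no hypothesis on $F$ at all beyond being a topological field. Combining these two facts, $I_F$ is a continuous bijection that is bicontinuous onto its image, and its image is the whole codomain, so it is a homeomorphism.

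For the reverse implication, I would argue by contraposition using only the injectivity half of the hypothesis. Suppose $I_F$ is a homeomorphism for every compact Hausdorff space $X$; in particular, $I_F$ is injective for every such $X$. By Lemma \ref{UnoaUno}, injectivity of $I_F$ for all compact Hausdorff $X$ (or even just for $X = [0,1]$) is equivalent to $F$ being path connected, so we conclude that $F$ is path connected.

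There is essentially no obstacle here, since all of the real work has been absorbed into the earlier results: Lemma \ref{continua} for continuity, Lemma \ref{UnoaUno} for the injectivity/path-connectedness equivalence, Lemma \ref{1-2-1} together with the compact-to-Hausdorff closed-map argument for bicontinuity onto the image, and Theorem \ref{surjective} for surjectivity via the case analysis on $F$. The proof of the final theorem is therefore just the observation that the \textbf{if} direction collects the positive content of Theorems \ref{main} and \ref{surjective}, while the \textbf{only if} direction quotes Lemma \ref{UnoaUno} applied to $X = [0,1]$.
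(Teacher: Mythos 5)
Your proposal is correct and matches the paper's own (one-line) proof: the forward direction combines Theorem \ref{main} with Theorem \ref{surjective}, and the reverse direction is exactly the converse half of Theorem \ref{main}, which the paper obtains from Lemma \ref{UnoaUno} just as you do. No gaps.
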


An important question to ask at this point is: are there other examples
besides $\R$ and $\C$ that make Gelfand correspondence work? As it turns
out there are many other path connected topological fields of 0
characteristic not isomorphic to either $\R$ or $\C$. There are even
examples in positive characteristic; see for instance \cite{WaBe:1966},
where it is shown that any discrete field may be embedded in a path
connected field. However, by Pontryagin's classification theorem\ \cite%
{Pontrjagin:1932} the only locally compact ones are $\R$ and $\C$.

\ \ \ 

An inspection of the proof of Theorem \ref{surjective} shows that for any
topological field $F,$ given functions $\psi _{1},...,\psi _{n}\in C(X,F)$
such that $X=D(\psi _{1})\cup ...\cup D(\psi _{n})$ there exist continuous
functions $\phi _{1},...,\phi _{m}:F^{n}\rightarrow F$ such that $%
\displaystyle\Sigma _{i=1}^{n}\psi _{i}\phi _{1}(\psi _{1},..,\psi _{n})$
does not vanishes in $X.$ But the $\phi _{i}$ depend on the $\psi _{i}$. One
may wonder if the $\phi _{i}$ may be chosen independently; that is, whether
there exists $f=x_{1}\phi _{1}+...+x_{n}\phi _{n}$ such that $%
f^{-1}(0_{F})=(0_{F},...,0_{F}).$ We call this \textit{polynomially
generated } functions. We are able to show:

\begin{proposition}
\label{TeoTheMap} Let $F$ be a path connected metrizable topological field.
Then for every positive integer $n$ there is a continuous function $%
f_{n}:F^{n}\rightarrow F$ such that $f_{n}^{-1}(0_{F})=\left\{
(0_{F},...,0_{F})\right\} .$
\end{proposition}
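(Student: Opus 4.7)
The plan is to decouple the problem into two independent pieces and combine them by composition: (i) a continuous real-valued ``size'' function $\rho$ on $F^{n}$ vanishing precisely at the origin, supplied by the metrizability hypothesis, and (ii) a continuous injection $\gamma:[0,1]\to F$ sending $0$ to $0_{F}$, supplied by path-connectedness via Lemma \ref{conexoCharCero}. Because the statement only asks for a continuous $f_{n}$, and not for one of the polynomially generated form $x_{1}\phi_{1}+\cdots+x_{n}\phi_{n}$ highlighted just above, this decoupled composition $\gamma\circ\rho$ will be enough.

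For the first piece, fix any metric $d$ on $F$ compatible with its topology, and define
\[
\rho(x_{1},\dots,x_{n}) := \min\!\bigl(1,\ \max_{1\leq i\leq n} d(x_{i},0_{F})\bigr).
\]
Then $\rho:F^{n}\to[0,1]$ is continuous as a composition of continuous maps, and vanishes exactly at $(0_{F},\dots,0_{F})$ because $d$ is a metric. For the second piece, Lemma \ref{conexoCharCero} supplies an arc $\gamma:[0,1]\to F$ from $0_{F}$ to $1_{F}$, meaning $\gamma$ is a homeomorphism onto its image; in particular $\gamma$ is injective, so $\gamma^{-1}(0_{F})=\{0\}$.

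Set $f_{n}:=\gamma\circ\rho$. Then $f_{n}:F^{n}\to F$ is continuous, and $f_{n}(x)=0_{F}$ iff $\rho(x)=0$ iff $x=(0_{F},\dots,0_{F})$, as required. No step of this outline is a genuine obstacle: the substantive work was already done inside Lemma \ref{conexoCharCero}, while metrizability is invoked only to produce the real-valued size function $\rho$. I would emphasize that this $f_{n}$ is not, in general, polynomially generated in the sense of the preceding discussion, so Proposition \ref{TeoTheMap} is strictly weaker than the open question that motivates it.
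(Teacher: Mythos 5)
Your argument is correct and is essentially the paper's own proof: the paper likewise takes a bounded metric distance $\phi$ to the origin in $F^{n}$ and composes it with an arc $\gamma:[0,1]\to F$ sending $0$ to $0_{F}$ obtained from Lemma \ref{conexoCharCero}, setting $f_{n}=\gamma\circ\phi$. Your only deviation is building the metric on $F^{n}$ coordinatewise via a max, and you correctly identify the one subtle point---that $\gamma$ must be injective, not merely a path---which is exactly why arcwise connectedness is invoked.
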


\begin{proof} Let $d$ be a metric on $F^{n}$ and let $\bar{d}:=\min\{1, d\}$  be the standard bounded metric on  $F^{n}$. Define $\phi :F^{n}\rightarrow
\lbrack 0,1]$ as the bounded distance to $(0_{F},...,0_{F}).$ As $F$ is
arcwise connected (Lemma \ref{conexoCharCero}), there is an embbeding $[0,1]
\overset{\gamma }{\rightarrow }F$ sending $0$ to $0_{F}.$ Take $f_{n}=\gamma
\circ \phi .$ \end{proof}

We finish with the following question, that appeared naturally during the
progress of the paper, for which we don't have an answer.

\begin{question}
Let $F$ be a path connected topological field. Is there is a polynomially
generated function $f_{2} :F^{2}\rightarrow F$ such that $\displaystyle %
f_{2}^{-1}(\{0_{F}\})=(0_{F},0_{F})$?
\end{question}


\noindent

{\footnotesize Xavier Caicedo, Department of Mathematics, Universidad de los
Andes, Bogot\'a, Colombia (\texttt{xcaicedo@uniandes.edu.co})}

\noindent

{\footnotesize Guillermo Mantilla-Soler, Department of Mathematics,
Universidad Konrad Lorenz, Bogot\'a, Colombia (\texttt{gmantelia@gmail.com})}

\end{document}